\newtheorem{thm}{Theorem}
\newtheorem{lemma}[thm]{Lemma}
\newtheorem{prop}[thm]{Proposition}
\newtheorem{df}[thm]{Definition}
\newcommand{\A}{\mathcal{A}}
\newcommand{\HH}{\mathcal{H}}
\newcommand{\C}{\mathbb{C}}
\newcommand{\N}{\mathbb{N}}
\newcommand{\R}{\mathbb{R}}
\newcommand{\inner}[1]{\left<#1\right>}
\newcommand{\ket}[1]{\left|#1\right>}
\newcommand{\mv}[1]{\smash[b]{\underline{#1}}}
\begin{document}

\title{On irreducible representations of a class of quantum spheres}

\date{May 2022}

\author[F.~D'Andrea and G.~Landi]{Francesco D'Andrea and Giovanni Landi}

\address[F.~D'Andrea]
{Universit\`a di Napoli ``Federico II'' 
\newline \indent
and I.N.F.N. Sezione di Napoli, Complesso MSA, Via Cintia, 80126 Napoli, Italy}
\email{francesco.dandrea@unina.it}

\address[G.~Landi]
{Universit\`a di Trieste,
Via A. Valerio, 12/1, 34127  Trieste, Italy
\newline \indent
Institute for Geometry and Physics (IGAP) Trieste, Italy 
\newline \indent and INFN, Trieste, Italy}
\email{landi@units.it}

\subjclass[2010]{Primary: 20G42; Secondary: 58B32; 58B34.}

\keywords{Quantum symplectic groups, quantum symplectic spheres, representation theory.}

\maketitle

\section{Introduction}
 
Throughout the paper we shall let \mbox{$0<q<1$} be a deformation parameter and $n$ be a positive integer.  
We denote by $\A(S^{4n-1}_q)$ the complex unital $*$-algebra generated by elements $\{x_i,y_i\}_{i=1}^n$ and their adjoints, subject to the relations in Definition~\ref{de:sphere} below. This sphere is a comodule algebra for the quantum symplectic group $\A(Sp_q(n))$, with coaction $\A(S^{4n-1}_q) \to \A(Sp_q(n)) \otimes \A(S^{4n-1}_q)$.
In fact this is a quantum homogeneous space and the algebra $\A(S^{4n-1}_q)$ sits as a subalgebra of the algebra $\A(Sp_q(n))$. Representations of $\A(S^{4n-1}_q)$ can be obtained as restrictions of representations of $\A(Sp_q(n))$, see e.g.\ \cite{Sau17}. 

Let $\A(\Sigma^{2n+1}_q)$ be the quotient of $\A(S^{4n-1}_q)$ by the two-sided $*$-ideal generated by
the elements $\{x_i\}_{i=1}^{n-1}$. As customary, we interpret a quotient algebra as consisting of ``functions'' on a quantum subspace and think of $\Sigma^{2n+1}_q$ as a quantum subsphere of $S^{4n-1}_q$. 
If we further quotient by the ideal generated by the coordinate $x_n$, we get a $(2n-1)$-dimensional Vaksman-Soibelman quantum sphere \cite{VS91}, whose representation theory is well known (see e.g.~\cite{HL04}).
In this short letter we give an independent derivation of the bounded irreducible $*$-representations of the algebra $\A(\Sigma^{2n+1}_q)$ that do not annihilate the generator $x_n$.

\begin{df}\label{de:sphere}
We denote by $\A(S^{4n-1}_q)$ the complex unital $*$-algebra generated by elements $\{x_i,y_i\}_{i=1}^n$ and their adjoints, subject to the following relations. Firstly, one has:
\begin{gather}
x_ix_j=q^{-1}x_jx_i \;\;(i<j) ,\qquad
y_iy_j=q^{-1}y_jy_i \;\;(i>j) ,\qquad
x_iy_j=q^{-1}y_jx_i \;\;(i\neq j) , \label{eq:useD} \\
y_ix_i=q^2x_iy_i+(q^2-1)\sum_{k=1}^{i-1}q^{i-k}x_ky_k , \label{eq:useC}
\end{gather}
Next, one has
\begin{align}
x_ix_i^* &=x_i^*x_i+(1-q^2)\sum_{k=1}^{i-1}x_k^*x_k \label{eq:useA} \\
y_iy_i^* &=y_i^*y_i+(1-q^2)\left\{q^{2(n+1-i)}x_i^*x_i+\sum_{k=1}^nx_k^*x_k+\sum_{k=i+1}^{n}y_k^*y_k\right\} \label{eq:useB} \\
x_iy_i^* &=q^2y_i^*x_i \label{eq:useH} \\
x_ix_j^* &=qx_j^*x_i
\qquad(i\neq j) \label{eq:useG} \\
y_iy_j^* &=qy_j^*y_i-(q^2-1)q^{2n+2-i-j}x_i^*x_j
\qquad(i\neq j) \label{eq:useF} \\
x_iy_j^* &=qy_j^*x_i
\qquad(i<j)  \label{eq:useE} \\
x_iy_j^* &=qy_j^*x_i+(q^2-1)q^{i-j}y_i^*x_j
\qquad(i>j) \label{eq:useI}
\end{align}
Finally, one has the sphere relation:
\begin{equation}\label{eq:sphererel}
\sum_{i=1}^n(x_i^*x_i+y_i^*y_i)=1 \, .
\end{equation}
\end{df}
\noindent
One passes to the notations of \cite{LPR06} by setting $y_i=x_{2n+1-i}$ and replacing $q$ by $q^{-1}$.

Let $\A(\Sigma^{2n+1}_q)$ be the quotient of $\A(S^{4n-1}_q)$ by the two-sided $*$-ideal generated by
$\{x_i\}_{i=1}^{n-1}$. Let us write down explicitly the relations in this quotient algebra.
If we rename $y_{n+1}:=x_n$, it follows from \eqref{eq:useA} that $y_{n+1}$ is normal.
The remaining relations become:
\begin{align}
y_iy_j &=q^{-1}y_jy_i \qquad\qquad \big(\;i>j \wedge (i,j) \neq (n+1,n) \;\big)  \label{eq:1}  \\
y_i^*y_j &=q^{-1}y_jy_i^* \qquad\qquad \big(\;i>j \wedge (i,j) \neq (n+1,n)\;\big) \label{eq:2}  \\
y_{n+1}y_n &=q^{-2}y_ny_{n+1} \qquad\quad
y_{n+1}^*y_n =q^{-2}y_ny_{n+1}^* \label{eq:3}   \\
[y_i,y_i^*] &=(1-q^2) \sum_{k=i+1}^{n+1}y_k^*y_k\ \qquad ( i\neq n ) \label{eq:subA} \\
[y_n,y_n^*] &=(1-q^4)y_{n+1}^*y_{n+1} \label{eq:subB} 
\end{align}
plus the ones obtained by adjunction and the sphere relation:
\begin{equation}\label{eq:spherebis} 
\sum_{i=1}^{n+1}y_i^*y_i=1 \, .
\end{equation}
Using these relations, it is straightforward to check the following statement.

\begin{prop}\label{prop:rep}
For every $\lambda\in U(1)$, an irreducible bounded $*$-representation $\pi_\lambda$ of $\mathcal{A}(\Sigma^{2n+1}_q)$
on $\ell^2(\N^n)$ is given by the formulas:
\begin{align*}
\pi_\lambda(y_i)\ket{\mv{k}} &=q^{k_1+\ldots+k_{i-1}}\sqrt{1-q^{2k_i}}\ket{\mv{k}-\mv{e}_i} \qquad (1\leq i\leq n-1)\\
\pi_\lambda(y_n)\ket{\mv{k}} &=q^{k_1+\ldots+k_{n-1}}\sqrt{1-q^{4k_n}}\ket{\mv{k}-\mv{e}_n} \;,\\
\pi_\lambda(y_{n+1})\ket{\mv{k}} &=\lambda q^{|\mv{k}|+k_n}\ket{\mv{k}} \;,
\end{align*}
where $\mv{k}=(k_1,\ldots,k_n)\in\N^n$, $|\mv{k}|:=k_1+\ldots+k_n$,
$\{\ket{\mv{k}}\}_{\mv{k}\in\N^n}$ is the canonical orthonormal basis of $\ell^2(\N^n)$ and $\mv{e}_i$ the $i$-th row of the identity matrix of order $n$.
\end{prop}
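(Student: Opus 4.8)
The plan is to verify directly that the proposed operators satisfy all the defining relations of $\A(\Sigma^{2n+1}_q)$ listed in equations \eqref{eq:1}--\eqref{eq:spherebis}, and then to establish irreducibility. I would begin by recording the action of the adjoints. Since each $\pi_\lambda(y_i)$ for $i\leq n$ is a weighted shift lowering $k_i$ by one, the adjoint $\pi_\lambda(y_i^*)$ must be the corresponding raising operator; explicitly, I expect
\begin{align*}
\pi_\lambda(y_i^*)\ket{\mv{k}} &= q^{k_1+\ldots+k_{i-1}}\sqrt{1-q^{2(k_i+1)}}\,\ket{\mv{k}+\mv{e}_i} \qquad (1\leq i\leq n-1), \\
\pi_\lambda(y_n^*)\ket{\mv{k}} &= q^{k_1+\ldots+k_{n-1}}\sqrt{1-q^{4(k_n+1)}}\,\ket{\mv{k}+\mv{e}_n},
\end{align*}
while $\pi_\lambda(y_{n+1}^*)$ is the diagonal multiplication by $\bar\lambda q^{|\mv{k}|+k_n}$. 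The first routine step is to confirm these are genuinely adjoints by checking $\inner{\mv{k}|\pi_\lambda(y_i)\mv{k}'} = \overline{\inner{\mv{k}'|\pi_\lambda(y_i^*)\mv{k}}}$ on basis vectors, which reduces to matching the two square-root weights.

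Next I would check the relations by applying both sides to a generic basis vector $\ket{\mv{k}}$. The commutation relations \eqref{eq:1}--\eqref{eq:3} between distinct generators reduce to bookkeeping with the $q$-power prefactors: shifting $k_j$ then $k_i$ versus $k_i$ then $k_j$ produces prefactors differing exactly by the stated power of $q$, because lowering $k_j$ by one changes the exponent $k_1+\ldots+k_{i-1}$ appearing in $\pi_\lambda(y_i)$ whenever $j<i$. The two genuinely substantive relations are the commutators \eqref{eq:subA} and \eqref{eq:subB}. For \eqref{eq:subA} with $i\leq n-1$, both $\pi_\lambda(y_i)\pi_\lambda(y_i^*)$ and $\pi_\lambda(y_i^*)\pi_\lambda(y_i)$ act diagonally, and their difference on $\ket{\mv{k}}$ is $q^{2(k_1+\ldots+k_{i-1})}\big[(1-q^{2(k_i+1)})-(1-q^{2k_i})\big] = q^{2(k_1+\ldots+k_{i-1})}(q^{2k_i}-q^{2k_i+2}) = (1-q^2)q^{2(k_1+\ldots+k_i)}$; I would then verify that the right-hand side $(1-q^2)\sum_{k=i+1}^{n+1}\pi_\lambda(y_k^*)\pi_\lambda(y_k)$ acts diagonally with the same eigenvalue, where the telescoping structure of the weights $q^{2(k_1+\ldots+k_{j-1})}(1-q^{2k_j})$ for $i<j\leq n$ together with the $y_{n+1}$ term is engineered to collapse to exactly $q^{2(k_1+\ldots+k_i)}$. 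The analogous check for \eqref{eq:subB}, where the $q^4$-weights and the factor $(1-q^4)$ for $y_{n+1}^*y_{n+1}$ appear, is where I expect the numerology to be most delicate, so I would treat it as the main computational obstacle.

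The final piece is the sphere relation \eqref{eq:spherebis}: summing the diagonal operators $\pi_\lambda(y_i^*)\pi_\lambda(y_i)$ over all $i$ should telescope to the identity on each $\ket{\mv{k}}$, and this is essentially the same telescoping identity used above, applied with $i=0$. I expect the partial sums $\sum_{i=1}^{m}$ to collapse to $1-q^{2(k_1+\ldots+k_m)}$ (with the appropriate $q^4$ modification at the last step), so that the full sum over $i=1,\ldots,n+1$ gives $1$ as required. For irreducibility I would argue that any nonzero closed invariant subspace must contain some basis vector: the diagonal operator $\pi_\lambda(y_{n+1}^*y_{n+1})$ has simple spectrum on the joint eigenspaces once one also diagonalizes the commuting family $\{\pi_\lambda(y_i^*y_i)\}$, and these joint eigenspaces are spanned by individual $\ket{\mv{k}}$; invariance under the raising and lowering operators $\pi_\lambda(y_i),\pi_\lambda(y_i^*)$ then lets one reach every $\ket{\mv{k}'}$ from any single $\ket{\mv{k}}$, forcing the invariant subspace to be all of $\ell^2(\N^n)$. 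The only subtlety in this last argument is ensuring the shift operators never vanish in the relevant direction, which holds because $\sqrt{1-q^{2(k_i+1)}}$ and $\sqrt{1-q^{4(k_n+1)}}$ are strictly positive for all $\mv{k}\in\N^n$.
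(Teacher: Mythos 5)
Your proposal is correct, and it is essentially the proof the paper has in mind: the authors give no argument for this proposition beyond the remark that it is ``straightforward to check'' from relations \eqref{eq:1}--\eqref{eq:spherebis}, and your outline (adjoint formulas, telescoping of the diagonal operators $\pi_\lambda(y_j^*y_j)$ for the commutators and the sphere relation, and irreducibility via the simple joint spectrum of the commuting diagonal family together with the non-vanishing raising weights) is exactly that check, with the key numerical identities stated correctly.
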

Our aim now is to prove the next proposition.

\begin{prop}\label{thm:two}
Any irreducible bounded $*$-representation of $\A(\Sigma^{2n+1}_q)$ that does not annihilate $x_n$ is unitarily equivalent to one of the representations in Proposition~\ref{prop:rep}.
\end{prop}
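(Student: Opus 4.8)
The plan is to turn the defining relations into a clean ``$q$-oscillator tower'' and then reconstruct the representation from a highest-weight vector. Write $\pi$ for the given representation, set $P_i:=\pi(y_i^*y_i)$, and introduce the partial sums $Q_i:=\sum_{k=i}^{n+1}P_k$, so that the sphere relation \eqref{eq:spherebis} reads $Q_1=1$ and $0\le Q_{n+1}\le\cdots\le Q_1=1$. First I would rewrite \eqref{eq:subA}--\eqref{eq:subB} in the telescoped form
\begin{gather*}
\pi(y_i^*y_i)=Q_i-Q_{i+1},\qquad \pi(y_iy_i^*)=Q_i-q^2Q_{i+1}\quad(i\le n-1),\\
\pi(y_ny_n^*)=Q_n-q^4Q_{n+1},\qquad \pi(y_{n+1}^*y_{n+1})=\pi(y_{n+1}y_{n+1}^*)=Q_{n+1},
\end{gather*}
and extract from \eqref{eq:1}--\eqref{eq:3} the intertwining rules $[\pi(y_i),Q_j]=0$ for $j\le i$, together with $\pi(y_i)Q_j=q^2Q_j\pi(y_i)$ for $i\le n-1<j$ and $\pi(y_n)Q_{n+1}=q^4Q_{n+1}\pi(y_n)$; the same computations show that the $Q_i$ commute with one another and that $\pi(y_{n+1})$ commutes with every $P_k$. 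Thus the $Q_i$ form a commuting family of positive contractions, $\pi(y_{n+1})$ is a normal element of their commutant, and each $\pi(y_i)$ behaves as a $q$-oscillator whose ``number operator'' is governed by the pair $(Q_i,Q_{i+1})$.

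Next I would exploit boundedness. Since each $\pi(y_i)$ with $i\le n$ rescales the $Q_{n+1}$-eigenvalue by a factor $q^{-2}$ (for $i\le n-1$) or $q^{-4}$ (for $i=n$), both exceeding $1$, while $Q_{n+1}\le 1$, no word in the $\pi(y_i)$ can be applied indefinitely. Feeding this into the single-mode relation $\pi(y_1y_1^*)-q^2\pi(y_1^*y_1)=(1-q^2)Q_1=(1-q^2)$, a quantum-disc relation, and iterating through $\pi(y_2y_2^*)-q^2\pi(y_2^*y_2)=(1-q^2)Q_2$ and its successors, forces the spectra of the $Q_i$ to be discrete, contained in $\{q^{2m}:m\ge 0\}\cup\{0\}$. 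In particular $Q_{n+1}$ has a largest eigenvalue $\beta$; any vector in the corresponding eigenspace is killed by all $\pi(y_i)$, $i\le n$ (they would raise the eigenvalue past $\beta$), whence $P_iv=0$ for $i\le n$ and, by $Q_1=1$, $\beta=1$. This produces a nonzero \emph{highest-weight space} $V_0:=\bigcap_{i=1}^n\ker\pi(y_i)$ on which $Q_{n+1}=1$; since $\pi(y_{n+1})$ preserves $V_0$ and satisfies $\pi(y_{n+1})^*\pi(y_{n+1})=Q_{n+1}=1$ there, it restricts to a unitary on $V_0$.

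Irreducibility then does the rest. Applying the raising operators $\pi(y_i^*)$ to a unit vector $v_0\in V_0$ generates
\[
\ket{\mv{k}}\ \propto\ \pi(y_1^*)^{k_1}\cdots\pi(y_n^*)^{k_n}\,v_0,\qquad \mv{k}\in\N^n,
\]
whose norms are computed from $\pi(y_iy_i^*)=Q_i-q^2Q_{i+1}$ (resp.\ $Q_n-q^4Q_{n+1}$), reproducing exactly the weights $q^{k_1+\ldots+k_{i-1}}\sqrt{1-q^{2k_i}}$ (resp.\ $\sqrt{1-q^{4k_n}}$) of Proposition~\ref{prop:rep}; the intertwining rules make the $\ket{\mv{k}}$ joint $Q$-eigenvectors whose eigenvalues determine $\mv{k}$, so distinct $\mv{k}$ give orthogonal vectors. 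If $\pi(y_{n+1})|_{V_0}$ were not a scalar, a nontrivial spectral subspace would generate, under the algebra, a proper invariant subspace of $\HH$ (orthogonal to the one generated by its complement in $V_0$), contradicting irreducibility; hence $\dim V_0=1$ and $\pi(y_{n+1})v_0=\lambda v_0$ with $|\lambda|=1$. The closed span of the $\ket{\mv{k}}$ is invariant, so by irreducibility it is all of $\HH$, the $\ket{\mv{k}}$ form an orthonormal basis, and the generators act precisely by the formulas of $\pi_\lambda$, yielding the desired unitary equivalence.

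I expect the main obstacle to be the second step: assembling the boundedness and positivity estimates across all $n$ oscillators into a single coherent argument that the joint spectrum is discrete and that a \emph{common} highest-weight vector exists, rather than merely individual kernels for each $\pi(y_i)$. The delicate point is to exclude a runaway tower in which some $Q_j$-eigenvalue vanishes prematurely and the termination fails; here one must use the hypothesis that $x_n=y_{n+1}$ is not annihilated, so that $Q_{n+1}\ne 0$, together with the telescoping identity $\sum_i\big(Q_i-Q_{i+1}\big)=Q_1=1$ underlying the sphere relation, to force $\beta=1$ and pin down the spectrum.
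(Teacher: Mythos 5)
Your overall architecture is sound and genuinely different in organization from the paper's: you diagonalize the commuting family $Q_i=\sum_{k\ge i}\pi(y_k^*y_k)$ and locate the highest-weight space at the top of the spectrum of $Q_{n+1}$, whereas the paper builds $\bigcap_{i\le k}\ker\pi(y_i)$ inductively via an operator lemma (Lemma~\ref{lemma:main}: if $[B,B^*]=(1-\mu)A$ and $A+B^*B=1$ with $A\ge 0$, then $\ker B=\{0\}$ forces $A=0$). Your algebraic bookkeeping checks out ($[\pi(y_i),Q_j]=0$ for $j\le i$, the $q^{\pm2}$/$q^{\pm4}$ intertwining for $j>i$, the telescoped disc relations), and your endgame is actually slicker than the paper's: reading off $\mv{k}$ from the joint $Q$-eigenvalues gives orthogonality of the $\ket{\mv{k}}$ in one line, where the paper runs a somewhat laborious induction. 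But there is a genuine gap exactly where you suspect it: the assertion that the spectra of the $Q_i$ are discrete, contained in $\{q^{2m}\}\cup\{0\}$, and that $Q_{n+1}$ therefore has a \emph{largest eigenvalue with a nonzero eigenvector}, is the entire analytic content of the theorem, and ``no word in the $\pi(y_i)$ can be applied indefinitely'' is not an argument --- a priori $\sigma(Q_{n+1})$ could be an interval, or its supremum could fail to be attained, and then your highest-weight space is empty. Even granting the single-mode disc analysis for $Q_2$ (where $Q_1=1$ is scalar), the inductive step for $Q_{i+1}$ involves a disc relation with \emph{operator} coefficient $Q_i$, so you must first decompose over the spectral subspaces of $Q_i$ and justify that its nonzero spectral values are isolated eigenvalues before you can iterate. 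None of this is supplied.

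The gap is repairable, in two ways. The paper's way: apply Lemma~\ref{lemma:main} to $A=Q_{k+1}$, $B=\pi(y_k)$ restricted to $\HH_{k-1}=\bigcap_{i<k}\ker\pi(y_i)$ (where $Q_k$ acts as the identity); since $\pi(y_{n+1})$ is injective (Lemma~\ref{lemma:atleast}(i)), $A\ne0$ there, so $\ker B\cap\HH_{k-1}\ne\{0\}$, and the common kernel is built up without ever discussing discreteness. Alternatively, staying inside your framework, you can bypass discreteness entirely: set $\beta:=\|Q_{n+1}\|>0$ and let $E$ be the spectral projection of $Q_{n+1}$ for $(q^2\beta,\beta]$, which is nonzero because $\beta\in\sigma(Q_{n+1})$. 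The intertwining $Q_{n+1}\pi(y_i)=q^{-2}\pi(y_i)Q_{n+1}$ (resp.\ $q^{-4}$ for $i=n$) gives $\pi(y_i)E=E'\pi(y_i)$ with $E'$ the spectral projection for $(\beta,q^{-2}\beta]$ (resp.\ $(q^{-2}\beta,q^{-4}\beta]$), which vanishes since $\sigma(Q_{n+1})\subseteq[0,\beta]$; hence every vector in $\operatorname{ran}E$ is killed by all $\pi(y_i)$, $i\le n$, and then $Q_1=1$ forces $Q_{n+1}=1$ on $\operatorname{ran}E$ and $\beta=1$. Either route closes the hole; as written, your step two asserts the conclusion of the key lemma rather than proving it.
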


We need a few preliminary lemmas.

\begin{lemma}\label{lemma:aux}
For all $m\geq 1$ and all $1\leq i<n$:
\begin{align*}
y_i(y_i^*)^m &=q^{2m}(y_i^*)^my_i+(1-q^{2m})(y_i^*)^{m-1}\left(1-\sum\nolimits_{k<i}y_k^*y_k\right) \\
y_n(y_n^*)^m &=q^{4m}(y_n^*)^my_n+(1-q^{4m})(y_n^*)^{m-1}\left(1-\sum\nolimits_{k<n}y_k^*y_k \right)
\end{align*}
\end{lemma}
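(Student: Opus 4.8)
The plan is to establish the case $m=1$ directly from the defining relations and then induct on $m$. First I would rewrite the commutators \eqref{eq:subA} and \eqref{eq:subB} by eliminating the tail sums with the sphere relation \eqref{eq:spherebis}. For $1\le i<n$, since $\sum_{k=i+1}^{n+1}y_k^*y_k = 1-\sum_{k\le i}y_k^*y_k$, relation \eqref{eq:subA} becomes $y_iy_i^*-y_i^*y_i=(1-q^2)\big(1-\sum_{k\le i}y_k^*y_k\big)$, which rearranges to the base case $y_iy_i^*=q^2y_i^*y_i+(1-q^2)\big(1-\sum_{k<i}y_k^*y_k\big)$. The same manipulation applied to \eqref{eq:subB}, using $y_{n+1}^*y_{n+1}=1-\sum_{k\le n}y_k^*y_k$, yields $y_ny_n^*=q^4y_n^*y_n+(1-q^4)\big(1-\sum_{k<n}y_k^*y_k\big)$, the base case of the second identity.

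The inductive step requires knowing that $y_i$ and $y_i^*$ both commute with the correction term $C_i:=1-\sum_{k<i}y_k^*y_k$, and this is the one point that needs care, so I would isolate it as the key computation. For each $k<i$ the $q$-commutation relations \eqref{eq:1} and \eqref{eq:2} give $y_iy_k=q^{-1}y_ky_i$ and, after taking adjoints of \eqref{eq:2}, $y_iy_k^*=qy_k^*y_i$; multiplying these shows $y_i(y_k^*y_k)=qy_k^*y_iy_k=y_k^*y_ky_i$, so the opposite powers of $q$ cancel and $y_i$ commutes with each summand of $C_i$, hence with $C_i$ itself. Taking adjoints gives the same for $y_i^*$. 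The restriction $i<n$ ensures that none of the excluded pairs $(n+1,n)$ intervenes.

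With these facts in hand the induction is routine. Assuming the first identity for $m$, I would multiply on the right by $y_i^*$, substitute the $m=1$ relation for the resulting factor $y_iy_i^*$, and push $C_i$ past the remaining power of $y_i^*$ using the commutation just established; collecting terms, the coefficient of $(y_i^*)^mC_i$ becomes $q^{2m}(1-q^2)+(1-q^{2m})=1-q^{2(m+1)}$, which is exactly what the statement predicts at level $m+1$. The second identity follows by the identical argument with $q^2$ replaced by $q^4$ throughout. The main obstacle is therefore not the induction itself but verifying the commutation of $y_i$ with the lower sum $\sum_{k<i}y_k^*y_k$; once that is in place, everything else is bookkeeping.
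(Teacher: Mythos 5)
Your proof is correct and follows essentially the same route as the paper: the base case $m=1$ is obtained by rewriting \eqref{eq:subA} and \eqref{eq:subB} via the sphere relation \eqref{eq:spherebis}, and the induction hinges on the fact that $y_k^*y_k$ commutes with $y_i$ for $k<i\leq n$, which you correctly verify from \eqref{eq:1} and \eqref{eq:2} (the paper merely asserts this). Note only that the commutation argument is also needed for the second identity with $i=n$, where it still holds since the pairs $(n,k)$, $k<n$, are never the excluded pair $(n+1,n)$.
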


\begin{proof}
When $m=1$, these follow from \eqref{eq:subA} and \eqref{eq:subB}, and can be rewritten using \eqref{eq:spherebis} as:
\begin{align*}
y_iy_i^* &=q^{2}y_i^*y_i+(1-q^{2})\left(1-\sum\nolimits_{k<i}y_k^*y_k\right) \quad\text{if }i<n \\
y_ny_n^* &=q^{4}y_n^*y_n+(1-q^{4})\left(1-\sum\nolimits_{k<n}y_k^*y_k \right)
\end{align*}
The general result easily follows using the latter relations, induction on $m$ and the fact that $y_k^*y_k$ commutes
with $y_i$ for all $k<i\leq n$.
\end{proof}
\begin{lemma}\label{lemma:main}
Let $A\geq 0$ and $B$ be bounded operators on a Hilbert space $\HH$ satisfying
\begin{equation}\label{eq:defining}
[B,B^*]=(1-\mu)A \qquad\quad
A+B^*B=1
\end{equation}
with $0<\mu<1$. Then, $\ker(B)=\{0\}$ if and only if $A=0$.
\end{lemma}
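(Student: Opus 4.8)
My plan is to convert the two relations in \eqref{eq:defining} into a single twisted commutation relation between $A$ and $B$, and then to read off $\ker(B)$ from the spectral theory of the positive operator $A$. First I would rewrite the defining relations as $B^*B=1-A$ and, adding $[B,B^*]=(1-\mu)A$ to this,
\[
BB^*=B^*B+(1-\mu)A=1-\mu A .
\]
In particular $0\leq B^*B\leq 1$, so $B$ is a contraction. The key algebraic step is to combine these to obtain
\[
BA=B(1-B^*B)=B-(BB^*)B=B-(1-\mu A)B=\mu\,AB ,
\]
a relation in which the factor $\mu\in(0,1)$ will do all the work. The easy direction is then immediate: if $A=0$ both $B^*B=1$ and $BB^*=1$, so $B$ is unitary and $\ker(B)=\{0\}$.

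For the converse I would prove the contrapositive, showing $A\neq 0\Rightarrow\ker(B)\neq\{0\}$. Iterating $BA=\mu AB$ gives $BA^m=\mu^m A^m B$ for every $m\geq 0$, hence $Bp(A)=p(\mu A)B$ for all polynomials $p$, and by the standard functional-calculus approximation $Bf(A)=f(\mu A)B$ for every bounded Borel function $f$. Applied to indicator functions this says that the spectral measure $E$ of $A$ satisfies
\[
B\,E(\Omega)=E(\mu^{-1}\Omega)\,B , \qquad \mu^{-1}\Omega:=\{\mu^{-1}t:t\in\Omega\},
\]
for every Borel set $\Omega$; geometrically, $B$ intertwines the spectral subspaces of $A$ while dilating the spectral parameter by $\mu^{-1}>1$.

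To finish I would look at the top of the spectrum. Set $s:=\|A\|=\max\sigma(A)$, which is strictly positive once $A\neq 0$. Choosing $\Omega=(\mu s,s]$ gives $\mu^{-1}\Omega=(s,s/\mu]$, an interval lying entirely above $\sigma(A)\subseteq[0,s]$, so $E(\mu^{-1}\Omega)=0$ and therefore $B\,E(\Omega)=0$. On the other hand $E(\Omega)\neq 0$: since $s\in\sigma(A)=\operatorname{supp}E$, every interval $(s-\varepsilon,s]$ with $0<\varepsilon<s(1-\mu)$ carries nonzero spectral projection and is contained in $\Omega$. Thus $B$ annihilates the nonzero subspace $\operatorname{Ran}E(\Omega)$ and $\ker(B)\neq\{0\}$. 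I expect the main obstacle to be precisely this last point: because $s=\max\sigma(A)$ need not be an eigenvalue, one cannot literally exhibit an eigenvector that $B$ pushes out of the spectrum, and the spectral-projection formulation is what makes the heuristic ``$B$ raises the $A$-spectrum'' rigorous in the presence of continuous spectrum.
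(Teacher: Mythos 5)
Your proof is correct, but it takes a genuinely different route from the paper's. The paper also derives $BB^*=1-\mu A$, but then introduces the co-isometry $U:=(BB^*)^{-1/2}B$ (well defined because $\|\mu A\|<1$ makes $BB^*$ invertible), observes $\ker(B)=\ker(U)$ and $\mu A=UAU^*$, and argues that if $\ker(B)=\{0\}$ then $U$ is unitary, whence $\lambda\in\sigma(A)\Rightarrow\mu^{-1}\lambda\in\sigma(A)$; iterating produces an unbounded sequence in $\sigma(A)$ unless $\sigma(A)=\{0\}$, i.e.\ $A=0$. You instead bypass $U$ entirely: from $B^*B=1-A$ and $BB^*=1-\mu A$ you extract the twisted commutation relation $BA=\mu AB$, promote it through the Borel functional calculus to $B\,E(\Omega)=E(\mu^{-1}\Omega)\,B$ for the spectral measure $E$ of $A$, and then kill $B$ on the nonzero spectral subspace $\operatorname{Ran}E\bigl((\mu s,s]\bigr)$ with $s=\|A\|$, since $\mu^{-1}(\mu s,s]$ lies above $\sigma(A)$. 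The trade-offs: the paper's argument uses only elementary invertibility and the notion of spectrum (no measurable functional calculus), but needs the auxiliary operator $U$, the little argument that a co-isometry with trivial kernel is unitary, and an induction along $\mu^{-k}\lambda$; yours requires the spectral theorem in its projection-valued-measure form and the (standard but worth stating) passage from polynomials to bounded Borel functions, but in exchange it is contrapositive-free of any unitarity assumption, needs only a single application of the scaling at the top of the spectrum, and is more constructive in that it locates a concrete nonzero subspace of $\ker(B)$. Your closing remark correctly identifies the one genuine subtlety — $\|A\|$ need not be an eigenvalue — and the spectral-projection formulation handles it properly.
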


\begin{proof}
If $A=0$, from \eqref{eq:defining} it follows that $B$ is unitary, so that $\ker(B)=\{0\}$. We have to prove the opposite implication.
From \eqref{eq:defining} we deduce that $\|A\|\leq 1$ and:
\begin{equation}\label{eq:othersphere}
BB^*=B^*B+(1-\mu)A=1-\mu A.
\end{equation}
Since $\|\mu A\|<1$, the operator $BB^*$ has bounded inverse. Therefore
$U:=(BB^*)^{-1/2}B$ is well defined. Notice that $UU^*=1$ and $\ker(B)=\ker(U)$. From \eqref{eq:othersphere}, it follows that
$$
\mu A=1-BB^*=U(1-B^*B)U^*=UAU^* \;.
$$
Assume that $\ker(U)=\{0\}$. The identity $U(1-U^*U)=0$ implies $U(1-U^*U)v=0$ and hence $(1-U^*U)v=0$ for all $v\in\HH$.
Therefore $U^*U=1$ and $U$ is a unitary operator.

Let $\lambda\in\C$ and suppose $A-\mu^{-1}\lambda$ has bounded inverse. Then
$$
A-\lambda=\mu U^*(A-\mu^{-1}\lambda)U
$$
has bounded inverse as well. Thus, $\lambda\in\sigma(A)$ implies that $\mu^{-1}\lambda\in\sigma(A)$ and hence, by induction, that
$\mu^{-k}\lambda\in\sigma(A)$ for all $k\geq 0$. Since $A$ is bounded and the sequence $(\mu^{-k}\lambda)_{k\geq 0}$ is divergent when $\lambda\neq 0$, it follows that $\sigma(A)=\{0\}$. Hence $A=0$.
\end{proof}

\begin{lemma}\label{lemma:atleast}
Let $\pi$ be an irreducible bounded $*$-representation of $\mathcal{A}(\Sigma^{2n+1}_q)$ with $\pi(y_{n+1})\neq 0$. Then:
\begin{itemize}\itemsep=2pt
\item[(i)] $\pi(y_{n+1})$ is injective;
\item[(ii)] there exists a vector $\xi\neq 0$ such that $\pi(y_i)\xi=0$ for all $i\neq n+1$.
\end{itemize}
\end{lemma}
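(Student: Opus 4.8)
The plan is to exploit the $q$-commutation relations \eqref{eq:1}--\eqref{eq:3} between $y_{n+1}$ and the remaining generators, which yield clean intertwining relations between each $\pi(y_i)$, $i\leq n$, and the bounded positive operator $N:=\pi(y_{n+1})^*\pi(y_{n+1})$. Concretely, a direct manipulation of \eqref{eq:1}--\eqref{eq:2} gives $\pi(y_i)N=q^2N\pi(y_i)$ for $1\leq i<n$, while \eqref{eq:3} gives $\pi(y_n)N=q^4N\pi(y_n)$; writing $c_i=q^2$ for $i<n$ and $c_n=q^4$, one has $\pi(y_i)N=c_iN\pi(y_i)$ with $0<c_i<1$ in every case. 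By the sphere relation \eqref{eq:spherebis}, $N\leq\sum_j\pi(y_j)^*\pi(y_j)=1$, so $N$ is positive with $s:=\|N\|=\max\sigma(N)$, and $s>0$ precisely because $\pi(y_{n+1})\neq 0$.

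For part (i) I would show that $\ker N=\ker\pi(y_{n+1})$ is invariant under every generator. From $\pi(y_i)N=c_iN\pi(y_i)$ one gets $N\pi(y_i)=c_i^{-1}\pi(y_i)N$, so $Nv=0$ forces $N\pi(y_i)v=0$; taking adjoints, $N\pi(y_i)^*=c_i\pi(y_i)^*N$ shows that $\pi(y_i)^*$ preserves $\ker N$ as well. Since $\pi(y_{n+1})$ is normal, both it and its adjoint also preserve $\ker N$. Hence the orthogonal projection $Q$ onto $\ker N$ lies in the commutant $\pi(\A(\Sigma^{2n+1}_q))'$, so irreducibility yields $Q\in\{0,1\}$. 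As $Q=1$ would mean $N=0$, i.e.\ $\pi(y_{n+1})=0$, we conclude $Q=0$, and $\pi(y_{n+1})$ is injective.

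For part (ii) I would extract a joint kernel vector from the top of the spectrum of $N$. Iterating $\pi(y_i)N=c_iN\pi(y_i)$ gives $\pi(y_i)p(N)=p(c_iN)\pi(y_i)$ for every polynomial $p$, and by uniform approximation this extends to $\pi(y_i)f(N)=f(c_iN)\pi(y_i)$ for all continuous $f$ on $[0,s]$. Choosing a continuous $f\geq 0$ with $f\equiv 0$ on $[0,q^2s]$ and $f(s)=1$, the inclusion $c_i\,\sigma(N)\subseteq[0,q^2s]$ gives $f(c_iN)=0$, hence $\pi(y_i)f(N)=0$ for every $i\leq n$; on the other hand $f(N)\neq 0$ because $s\in\sigma(N)$ and $f(s)=1$. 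Any nonzero vector $\xi$ in the range of $f(N)$ then satisfies $\pi(y_i)\xi=0$ for all $i\neq n+1$, which is exactly (ii).

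The one delicate point is the passage from the purely algebraic identity $\pi(y_i)N=c_iN\pi(y_i)$ to the spectral statements about $N$. For (i) it is essential to verify that $\ker N$ is preserved by the \emph{adjoints} as well, so that $Q$ genuinely commutes rather than being only one-sidedly invariant; for (ii) one must justify the functional-calculus identity and observe that the gap $c_is<s$ is strict, so that the top bump $f(N)$ survives while $f(c_iN)$ is annihilated. Both rest on the single fact that each $c_i$ lies strictly between $0$ and $1$, which is where the specific $q$-commutation exponents in \eqref{eq:1}--\eqref{eq:3} enter.
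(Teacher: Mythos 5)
Your proof is correct. Part (i) is essentially the paper's argument: the kernel of $\pi(y_{n+1})$ is invariant under all generators and their adjoints because $y_{n+1}$ (and $y_{n+1}^*$) $q$-commutes with every other generator and $y_{n+1}$ is normal, so the kernel carries a subrepresentation and must be trivial; you merely make explicit the check on adjoints that the paper leaves implicit. Part (ii), however, takes a genuinely different route. The paper proves $\HH_k:=\bigcap_{i=1}^k\ker\pi(y_i)\neq\{0\}$ by induction on $k$, applying its Lemma~\ref{lemma:main} (with $A=\sum_{i>k}\pi(y_i^*y_i)$ and $B=\pi(y_k)$) to the restrictions of these operators to the nested invariant subspaces $\HH_{k-1}$; the engine there is the spectral dilation $\lambda\in\sigma(A)\Rightarrow\mu^{-1}\lambda\in\sigma(A)$. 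You instead work with the single positive operator $N=\pi(y_{n+1})^*\pi(y_{n+1})$, which every $\pi(y_i)$ with $i\leq n$ intertwines as $\pi(y_i)N=c_iN\pi(y_i)$ with $c_i\in\{q^2,q^4\}$, and you extract the joint kernel vector in one stroke via a continuous bump function supported above $q^2\|N\|$; the point that makes this work is that $c_i\leq q^2<1$ uniformly in $i$, so one function $f$ annihilates all the $f(c_iN)$ simultaneously while $f(N)\neq 0$ because $\|N\|\in\sigma(N)$. Your intertwining relations and the passage from polynomials to continuous functional calculus both check out against relations \eqref{eq:1}--\eqref{eq:3}. What each approach buys: yours is shorter, avoids the induction over generators, and makes the paper's Lemma~\ref{lemma:main} unnecessary for this purpose (it also locates $\xi$ at the top of $\sigma(N)$, consistent with the explicit model where $N\ket{\mv{0}}=\ket{\mv{0}}$); the paper's version isolates a reusable operator-theoretic lemma and builds the flag of kernels generator by generator, which does not depend on having a single element, like $y_{n+1}$ here, that $q$-commutes with everything else.
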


\begin{proof}
(i) If $a$ is any generator other than $y_{n+1}$, since $y_{n+1}a$ is a scalar multiple of $ay_{n+1}$, the operator $\pi(a)$ maps the kernel of $\pi(y_{n+1})$ to itself. Hence, $\ker\pi(y_{n+1})$ carries a subrepresentation of the irreducible representation $\pi$, so that either $\ker\pi(y_{n+1})=\{0\}$ or $\ker\pi(y_{n+1})=\HH$. The latter implies $\pi(y_{n+1})=0$, contradicting the hypothesis, so that the former must hold.

\medskip

\noindent
(ii) Given $1\leq k\leq n$, let $\HH_k:=\bigcap_{i=1}^k\ker\pi(y_i)$. We prove by induction on $k$ that $\HH_k\neq\{0\}$.
When $k=1$, this follows from Lemma \ref{lemma:main} applied to the operators $A=\sum_{i=2}^{n+1}\pi(y_i^*y_i)$
and $B=\pi(y_1)$. Since $\pi(y_{n+1})\neq 0$, it follows that $A\neq 0$, and hence $\ker(B)\neq\{0\}$.

Now assume that $\HH_{k-1}\neq\{0\}$. Let $A=\sum_{i=k+1}^{n+1}\pi(y_i^*y_i)$ and $B=\pi(y_k)$, and note that the operators $\pi(y_{n+1}),A,B,B^*$ map $\HH_{k-1}$ to itself, since $y_iy_j$ is a scalar multiple of $y_jy_i$ and $y_iy_j^*$ is a scalar multiple of $y_j^*y_i$ for all $i\neq j$. It follows from point (i) that $\pi(y_{n+1})|_{\HH_{k-1}}\neq 0$, so that $A$ is non-zero on $\HH_{k-1}$. The operator $A+B^*B$ restricts to the identity on $\HH_{k-1}$ and $[B,B^*]=(1-\mu)A$ with $\mu=q^2$ if $k<n$ and $\mu=q^4$ if $k=n$.
From Lemma \ref{lemma:main} applied to the restrictions of $A,B,B^*$ to $\HH_{k-1}$ it follows that $\ker(B)\cap\HH_{k-1}=\HH_k\neq \{0\}$.
\end{proof}

\begin{proof}[Proof of Prop.\ \ref{thm:two}]
Let $\pi$ be a bounded irreducible $*$-representation of $\A(\Sigma^{2n+1}_q)$ on a Hilbert space $\HH$ such that $\pi(y_{n+1})\neq 0$.
With an abuse of notation, we suppress the map $\pi$.
We know from Lemma \ref{lemma:atleast}(ii) that $V:=\bigcap_{i=1}^n\ker(y_i)$ is $\neq\{0\}$.
From the commutation relations we deduce that $y_{n+1}V\subset V$, so that $V$ carries a bounded $*$-representation of the commutative C*-algebra $C^*(y_{n+1},y_{n+1}^*)$ generated by $y_{n+1}$ and $y_{n+1}^*$.

Given $\mv{k}\in\N^n$ and $\xi\in V$ a unit vector, define:
$$
\ket{\mv{k}}_\xi:=\frac{1}{\sqrt{(q^2;q^2)_{k_1}\ldots(q^2;q^2)_{k_{n-1}}(q^4;q^4)_{k_n}}}(y_1^*)^{k_1}\ldots (y_n^*)^{k_n}\xi \;,
$$
where the $q$-shifted factorial is given by
$$
(a;b)_\ell:=\prod_{i=0}^{\ell-1}(1-ab^i) \;.
$$
Given $\mv{k}\in\mathbb{Z}^n$, set $\ket{\mv{k}}_\xi:=0$ if one of the components of $\mv{k}$ is negative.
From the commutation relations we deduce:
\begin{subequations}\label{eq:yiyn}
\begin{align}
y_i^*\ket{\mv{k}}_\xi &=q^{k_1+\ldots+k_{i-1}}\sqrt{1-q^{2k_i+2}}\ket{\mv{k}+\mv{e}_i}_\xi \qquad (i<n) \,,\\
y_n^*\ket{\mv{k}}_\xi &=q^{k_1+\ldots+k_{n-1}}\sqrt{1-q^{4k_n+4}}\ket{\mv{k}+\mv{e}_n}_\xi \;, \\
y_{n+1}\ket{\mv{k}}_\xi &=q^{|\mv{k}|+k_n}\ket{\mv{k}}_{y_{n+1}\xi} \;. \notag
\end{align}
\end{subequations}
If $W\subset V$ carries a subrepresentation of $C^*(y_{n+1},y_{n+1}^*)$, the Hilbert subspace of $\HH$ spanned by $\ket{\mv{k}}_\xi$ for $\xi\in W$ and $\mv{k}\in\N^n$ carries a subrepresentation of $\A(\Sigma^{2n+1}_q)$. Since $\HH$ is irreducible, $V$ carries an irreducible representation of $C^*(y_{n+1},y_{n+1}^*)$. This means that $V$ is one-dimensional. Let us fix a unit vector $\xi\in V$. Observe that the vectors
\begin{equation}\label{eq:set}
\{\ket{\mv{k}}_\xi\}_{\mv{k}\in\N^n}
\end{equation}
span $\HH$. Moreover $y_{n+1}\xi=\lambda\xi$ for some $\lambda\in\R$, and
$$
y_{n+1}\ket{\mv{k}}_\xi =\lambda\,q^{|\mv{k}|+k_n}\ket{\mv{k}}_{\xi} \;. 
$$
From now on, instead of $\ket{\mv{k}}_\xi $ we shall simply write $\ket{\mv{k}}$. By \eqref{eq:spherebis}, it follows that
$$
1=\inner{\mv{0}|\mv{0}}=\inner{\mv{0}\left|\sum\nolimits_{i=1}^{n+1}y_i^*y_i\right|\mv{0}}=|\lambda|^2 \;,
$$
hence $\lambda\in U(1)$. It remains to prove that the set \eqref{eq:set} is orthonormal, so that by adjunction from \eqref{eq:yiyn} we get the formulas in Prop.~\ref{prop:rep}.

Let $W_i$ be the span of vectors $\ket{\mv{k}}_\xi$ with $k_1=\ldots=k_i=0$.
It follows from the commutation relations that $y_k$ is zero on $W_i$ for all $k\leq i$. 

Applying the identities in Lemma \ref{lemma:aux} to a vector $\ket{\mv{k}}\in V_i$ we find that
\begin{align*}
y_i(y_i^*)^m
\ket{0,\ldots,0,k_{i+1},\ldots,k_n} &=(1-q^{2m})(y_i^*)^{m-1}\ket{0,\ldots,0,k_{i+1},\ldots,k_n} \\
y_n(y_n^*)^m\ket{\mv{0}} &=(1-q^{4m})(y_n^*)^{m-1}\ket{\mv{0}}
\end{align*}
for all $m\geq 1$ and all $1\leq i<n$.
Using \eqref{eq:yiyn} we find that
\begin{align*}
y_i\ket{0,\ldots,0,m-1,k_{i+1},\ldots,k_n} &=\sqrt{1-q^{2m}}\ket{0,\ldots,0,m-1,k_{i+1},\ldots,k_n} \\
y_n\ket{0,\ldots,0,m-1} &=\sqrt{1-q^{4m}}\ket{0,\ldots,0,m-1}
\end{align*}
Multiplying from the left by $\left<\mv{j}-\mv{e}_i\right|$ and using \eqref{eq:yiyn} again, we find that
\begin{align*}
\sqrt{1-q^{2j_i}}\inner{\mv{j}|\mv{k}}
  &=\sqrt{1-q^{2k_i}}\inner{\mv{j}-\mv{e}_i,\mv{k}-\mv{e}_i} \;, \\
\sqrt{1-q^{4j_n}}\inner{j_n\mv{e}_n|k_n\mv{e}_n}
  &=\sqrt{1-q^{4k_n}}\inner{(j_n-1)\mv{e}_n,(k_n-1)\mv{e}_n} \;,
\end{align*}
where the former is valid whenever $j_1,\ldots,j_i=k_1=\ldots=k_i=0$.
From these relations, an obvious induction proves that the set \eqref{eq:set} is orthonormal provided every vector
$\ket{\mv{k}}$ with $\mv{k}\neq\mv{0}$ is orthogonal to $\ket{\mv{0}}$.
But this is obvious. If $k_i\neq 0$ for some $i$, then
$$
\inner{\mv{k}|\mv{0}}\propto\inner{\mv{k}-\mv{e}_i|y_i|\mv{0}}=0
$$
since $y_i$ annihilates $\xi$.
\end{proof}


\begin{thebibliography}{1}
\itemsep=2pt


\bibitem{HL04}
E. Hawkins and G. Landi, \textit{Fredholm Modules for Quantum Euclidean Spheres}, J. Geom. Phys. 49 (2004) 272--293.

\bibitem{LPR06}
G.~Landi, C.~Pagani and C.~Reina, \textit{A Hopf bundle over a quantum four-sphere from the symplectic group}, Commun. Math. Phys. 263 (2006) 65--88.


\bibitem{Sau17}
B. Saurabh, \textit{Topological invariance of quantum quaternion spheres}, Pacific J. Math. 288 (2017), 435--452.

\bibitem{VS91}
L.L. Vaksman and Y.S. Soibelman, \textit{The algebra of functions on quantum $SU(n + 1)$ group and odd-dimensional quantum spheres}, Leningrad Math. J. 2 (1991) 1023--1042.

\end{thebibliography}
\end{document}